\documentclass[11pt]{amsart}
\usepackage{amsmath,amsfonts,amscd,amssymb}
\theoremstyle{plain}
\newcounter{thmcount}
\newtheorem{theorem}[thmcount]{Theorem}
\newtheorem{conjecture}[thmcount]{Conjecture}
\newtheorem{lemma}[thmcount]{Lemma}
\theoremstyle{definition}
\newtheorem{remark}[thmcount]{Remark}
\newtheorem*{definition}{Definition}
\newtheorem*{notation}{Notation}
\newtheorem*{example}{Example}
\newtheorem*{acknowledgements}{Acknowledgements}

\input cyracc.def
\font\tencyr=wncyr10
\def\sha{\text{\tencyr\cyracc{Sh}}}

\def\F{{\mathbb F}}

\def\Q{{\mathbb Q}}

\def\Z{{\mathbb Z}}

\def\newmathop#1{\expandafter\gdef\csname #1\endcsname{\mathop{\rm #1}\nolimits}}
\newmathop{rk}
\newmathop{ord}
\newmathop{Gal}
\newmathop{Res}
\newmathop{Sel}
\let\oldchar\char
\newmathop{char}

\let\char\oldchar
\newmathop{GL}
\newmathop{Re}
\newmathop{Im}

\def\dimF2{\dim_{\scriptscriptstyle\F_2}\!}
\def\dimFp{\dim_{\scriptscriptstyle\F_p}}

\let\iso\cong

\def\squeezedcdots{\cdot\!\cdot\!\cdot}

\def\comment{}
\def\endcomment{}
%\long\def\comment#1\endcomment{}

\def\nth#1{$#1^{\rm th}$}

\begin{document}

\hskip -10cm.\vskip -5mm

\title{A note on the Mordell-Weil rank modulo \larger[3]$n$}
\author{Tim and Vladimir Dokchitser}
%\date{March 21, 2009}
\subjclass[2000]{11G05 (Primary) 11G40 (Secondary)}
\address{Robinson College, Cambridge CB3 9AN, United Kingdom}
\email{t.dokchitser@dpmms.cam.ac.uk}
\address{Emmanuel College, Cambridge CB2 3AP, United Kingdom}
\email{v.dokchitser@dpmms.cam.ac.uk}

\begin{abstract}
Conjecturally,
the parity of the Mordell-Weil rank of an \hbox{elliptic} curve over a
number field $K$ is determined by its root number. The root number
is a product of local root numbers, so the rank modulo 2
is (conjecturally)
the sum over all places of $K$ of a function of elliptic curves
over local fields. This note shows that there can be no analogue
for the rank modulo 3, 4 or 5, or for the rank itself.
In fact, standard conjectures for elliptic curves imply
that there is no analogue modulo $n$ for any $n>2$,
so this is purely a parity phenomenon.
\end{abstract}

\maketitle

{\vskip-3mm\smaller[2]\hfill Surely, otherwise somebody would have spotted it by now.}
{\par\noindent\vskip-3pt\noindent\smaller[2]\hfill --- {\it Tom Fisher}\phantom{x}\\[2pt]}

\comment

It is a consequence of the Birch--Swinnerton-Dyer conjecture
that the parity of the Mordell-Weil rank of an elliptic curve $E$ over a
number field~$K$ is determined by its root number, the sign in the
functional equation of the $L$-function.
The root number is a product of local root numbers,
which leads to a conjectural formula of the form
$$
  \rk E/K \equiv \sum\limits_v\lambda(E/K_v)\mod 2,
$$
where $\lambda$ is an invariant of elliptic curves over {\em local\/} fields,
and $v$ runs over the places of $K$.
One might ask whether there is a local expression like this
for the rank modulo 3 or modulo 4, or even for the rank itself.
The purpose of this note is to show that, unsurprisingly,
the answer is `no'.

The idea is simple:
if the rank modulo $n$ were a sum of local $\Z/n\Z$-valued invariants,
then $\rk E/K$ would be a multiple of $n$ whenever
$E$ is defined over $\Q$ and $K/\Q$ is a Galois extension where every
place of $\Q$ splits into a multiple of $n$ places.
However, for small $n>2$ it is easy to find $E$ and $K$ for which this
property fails (Theorem \ref{thmmw345}).
In fact, if one believes the standard heuristics concerning
ranks of elliptic curves in abelian extensions, it fails for every $n>2$
and every $E/\Q$ (Theorems \ref{locp}, \ref{loc4}).

This kind of argument can be used to test whether a global invariant
has a chance of being a sum of local terms.
We will apply it to other standard invariants of elliptic curves
and show that
the parity of the 2-Selmer rank, the parity of the rank of the $p$-torsion
and $\dimF2\sha[2]$ modulo 4 cannot be expressed as a sum
of local terms (Theorem~\ref{other}).
Finally, we will also comment on $L$-functions all of whose local factors
are \nth{n} powers and discuss the parity of the analytic rank
for non-self-dual twists of elliptic curves (Remarks \ref{lfun},\ref{cubic}).

Our results only prohibit an expression for the rank as a {\em sum\/} of
local terms. Local data does determine the rank, see Remark \ref{determine}.

\section{Mordell-Weil rank is not a sum of local invariants}
%%%%%%%%%%%%%%%%%%%%%%%%%%%%%%%%%%%%%%%%

\begin{definition}
Suppose $(K,E)\mapsto\Lambda(E/K)$ is some global invariant
of elliptic curves over number fields%
\footnote{%For $\Lambda$ to be an invariant means
Meaning
that if $K\iso K'$ and
$E/K$ and $E'/K'$ are isomorphic elliptic curves (identifying $K$ with $K'$),
then $\Lambda(E/K)=\Lambda(E'/K')$.}.
We say it is a {\em sum of local invariants} if
$$
  \Lambda(E/K) = \sum\limits_v \lambda(E/K_v),
$$
where
$\lambda$ is some invariant of elliptic curves over local fields,
and the sum is taken over all places of $K$.
\end{definition}

\noindent
Implicitly, $\Lambda$ and $\lambda$ take values in some abelian group $A$,
usually $\Z$. Moreover $\lambda(E/K_v)$ should be $0$
%? usually -> Z unless otherwise stated
for all but finitely many $v$.

\begin{example}
If the Birch--Swinnerton-Dyer conjecture holds
(or if $\sha$ is finite, see \cite{Kurast}), then the Mordell-Weil rank
modulo 2 is a sum of local invariants with values in $\Z/2\Z$. Specifically,
for an elliptic curve $E$ over a local field $k$ write $w(E/k)\!=\!\pm1$ for
its local root number, and define $\lambda$ by
$(-1)^{\lambda(E/k)}\!=\!w(E/k)$.
Then
$$
  \rk E/K \equiv \sum\limits_v\lambda(E/K_v)\mod 2.
$$
An explicit description of local root numbers can be found in
\cite{RohG} and \cite{Kurast}.
\end{example}

\begin{theorem}
\label{thmmw}
The Mordell-Weil rank is not a sum of local invariants.
\end{theorem}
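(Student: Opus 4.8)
The plan is to exploit the key idea already sketched in the introduction: if the Mordell-Weil rank were a sum of local invariants taking values in $\Z$ (or in any abelian group $A$), then its value on a curve $E/\Q$ base-changed to a suitable Galois extension $K/\Q$ would be forced to be a multiple of an integer we can make arbitrarily large. First I would suppose for contradiction that $\rk E/K = \sum_v \lambda(E/K_v)$ for some local invariant $\lambda$ valued in $\Z$. The crucial observation is that the summand $\lambda(E/K_v)$ depends only on the isomorphism class of $E$ over the local field $K_v$, so places of $K$ lying over the same place of $\Q$ and giving rise to isomorphic completions contribute equal terms.

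The heart of the argument is a \emph{splitting} construction. I would fix an elliptic curve $E/\Q$ and seek a Galois extension $K/\Q$ with the property that every place $v$ of $\Q$ splits into a number of places of $K$ divisible by some large integer $N$. For such a $K$, grouping the places of $K$ by the place of $\Q$ below them, each group consists of $N \cdot (\text{something})$ completions that are mutually isomorphic over the base (since $K/\Q$ is Galois, the decomposition group acts transitively and all completions above a given $v$ are isomorphic), so the total sum $\sum_{w} \lambda(E/K_w)$ is divisible by $N$. Hence $N \mid \rk E/K$. Since $E$ has a fixed rank over $\Q$ and we have freedom in choosing $K$, the point is to arrange that $\rk E/K$ is a definite positive integer that is \emph{not} divisible by $N$ for suitable large $N$, yielding the contradiction.

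The main obstacle, and the step requiring genuine input, is producing such a $K$ together with control over $\rk E/K$. The cleanest route is to choose $E$ already of positive rank over $\Q$ and then take $K/\Q$ a suitable abelian (say cyclic of prime degree $\ell$) extension in which all places split into a multiple of $N$ places --- for instance, $K$ contained in a cyclotomic field where the relevant Frobenius elements are trivial --- while arranging by a rank-stability or base-change argument that $\rk E/K$ equals $\rk E/\Q$, a fixed number. Then taking $N$ larger than this fixed rank immediately contradicts $N \mid \rk E/K$. In the unconditional Theorem \ref{thmmw} I expect one can avoid heuristic input entirely by a careful explicit choice: pick $E/\Q$ of rank $1$, and exhibit a concrete Galois $K$ over which the rank is provably still $1$ (e.g.\ via the absence of new rational points, controlled by root numbers or by an explicit descent) while every place of $\Q$ splits completely enough that $N=2$ or $N=3$ already divides the putative local sum. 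This forces $1$ to be divisible by $N>1$, the desired contradiction.

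I would structure the write-up by first recording the divisibility lemma ($N \mid \sum_w \lambda(E/K_w)$ whenever places split into multiples of $N$), which is formal and follows purely from the isomorphism-invariance of $\lambda$, and then reducing Theorem \ref{thmmw} to the existence of one curve and one field where the rank is coprime to the splitting multiplicity. The formal divisibility part is routine; all the real content lies in exhibiting the curve--field pair, and I anticipate that the stronger Theorems \ref{thmmw345}, \ref{locp} and \ref{loc4} quoted in the introduction are precisely the tools (or the explicit constructions) that supply it.
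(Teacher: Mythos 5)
Your overall strategy is exactly the paper's: a formal divisibility lemma (the paper's Lemma \ref{split}: if every place of $K$ splits into a multiple of $n$ places of a Galois extension $F$, then any sum of local invariants valued in $\Z/n\Z$ vanishes for $F$), followed by an explicit curve--field pair whose rank is not divisible by the splitting multiplicity. The paper derives Theorem \ref{thmmw} from Theorem \ref{thmmw345} by exhibiting $E=$ 480a1 and, e.g., the degree $9$ subfield $F_3$ of $\Q(\zeta_{13},\zeta_{103})$, verifying $\rk E/F_3=1$ by $2$-descent. So your reduction is the right one, and you correctly identify that all the content lies in producing the pair.

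However, two of your concrete suggestions for producing it would fail. First, $N=2$ cannot work: if every place of $\Q$ splits into an even number of places of $K$, then the global root number $w(E/K)=\prod_v w(E/K_v)$ equals $+1$ automatically, so the parity conjecture (known in many cases, e.g.\ when $\sha$ is finite) forces $\rk E/K$ to be even --- indeed the whole point of the paper is that \emph{modulo $2$} the rank \emph{is} (conjecturally) a sum of local invariants, so no counterexample with $N=2$ can exist. You must take $N\ge 3$. Second, a cyclic extension $K/\Q$ of prime degree $\ell$ can never have the required splitting property: its decomposition groups are trivial or everything, and by Chebotarev a positive density of primes are inert, hence split into exactly one place. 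One needs a non-cyclic group such as $(\Z/n)^2$, so that unramified decomposition groups (being cyclic) are automatically proper, and one must additionally control the ramified primes --- this is why the paper chooses $13$ and $103$ to be cubes modulo one another, so that their decomposition groups in $F_3$ still have index divisible by $3$. Finally, the claim that one can arrange $\rk E/K=\rk E/\Q$ by a soft ``rank-stability'' argument is not substantiated; the paper settles this by an explicit $2$-descent computation over the subfields of $F_n$, and some such verification is unavoidable.
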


%In fact, the following stronger statement holds:
\noindent
This is a consequence of the following stronger statement:
%Even stronger, we have

\begin{theorem}
\label{thmmw345}
For $n\in\{3,4,5\}$ the Mordell-Weil rank modulo $n$ is not a sum of local
invariants (with values in $\Z/n\Z$).
\end{theorem}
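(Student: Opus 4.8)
The plan is to deduce the theorem from a simple local-to-global constraint and then to exhibit explicit counterexamples. Suppose for contradiction that the rank modulo $n$ is a sum of local invariants, say $\rk E/K \equiv \sum_v \lambda(E/K_v) \pmod n$ with $\lambda$ valued in $\Z/n\Z$. First I would observe that this forces a divisibility whenever $K$ is built symmetrically from $\Q$: take $E$ defined over $\Q$ and let $K/\Q$ be Galois with every place of $\Q$ splitting into a multiple of $n$ places. For a place $w$ of $\Q$, the group $\Gal(K/\Q)$ permutes the places $v\mid w$ transitively, so the completions $K_v$ are pairwise isomorphic over $\Q_w$; as $E$ is defined over $\Q$, these isomorphisms identify the base changes $E/K_v$, whence $\lambda(E/K_v)$ is independent of $v\mid w$. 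The $g_w\equiv0\pmod n$ places above $w$ therefore contribute $g_w\cdot\lambda(E/K_v)\equiv0$ in $\Z/n\Z$, and summing over all $w$ gives $\rk E/K\equiv0\pmod n$. It thus suffices to produce, for each $n\in\{3,4,5\}$, a curve $E/\Q$ and such a field $K$ with $\rk E/K\not\equiv0\pmod n$.

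The second step is to construct $K$. Since the decomposition group at an unramified prime is cyclic and, by Chebotarev, every cyclic subgroup occurs, I would use $(\Z/n\Z)^2$ for $n=3,5$ and $(\Z/2\Z)^3$ for $n=4$: in each of these every cyclic subgroup has index divisible by $n$, which disposes of the unramified places. I would realise $K$ as a compositum of cyclic fields of the relevant degree, ramified at disjoint sets of primes and arranged so that each prime ramifying in one factor splits completely in the others. A short local analysis then bounds the remaining decomposition groups (order at most $2$ when $n=4$, a proper subgroup when $n=3,5$), so that every finite place splits into a multiple of $n$ places; the archimedean place causes no trouble, since it splits into $[K:\Q]$ or $[K:\Q]/2$ places, both divisible by $n$ here.

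The third step is to pin down the rank unconditionally. For $n=4$ and $K=\Q(\sqrt{d_1},\sqrt{d_2},\sqrt{d_3})$ the Mordell-Weil group decomposes along the characters of $(\Z/2\Z)^3$, giving $\rk E/K=\sum_{\delta}\rk E^{\delta}/\Q$, a sum over the eight quadratic twists by the square classes in $\langle d_1,d_2,d_3\rangle$; choosing $E$ and the $d_i$ so that exactly one of these ranks is $1$ and the rest vanish yields $\rk E/K=1\not\equiv0\pmod4$, with each summand an ordinary rank over $\Q$ certifiable by descent. For $n=3,5$ I would instead take $E/\Q$ of analytic rank $1$, so that $\rk E/\Q=1$ by Gross-Zagier-Kolyvagin, and choose $K$ so that $L(E,\chi,1)\neq0$ for every nontrivial character $\chi$ of $\Gal(K/\Q)$; Kato's Euler system then forces the $\chi$-parts of the Mordell-Weil group to vanish, so $\rk E/K=\rk E/\Q=1\not\equiv0\pmod n$.

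The main obstacle is this rank computation for $n=3,5$. Because $(\Z/n\Z)^2$ forces $[K:\Q]=n^2$, a direct descent is impractical and the argument genuinely relies on feeding the order-$n$ twisted $L$-values into Kato's bound. The delicate point is to produce a single $K$ that both satisfies the local splitting conditions and makes all the relevant twisted $L$-values nonzero; I expect to secure this by combining the large supply of admissible $K$ with nonvanishing results for twisted $L$-values (and, for a concrete instance, direct verification). By contrast, the $n=4$ case is elementary once the multiquadratic field is written down.
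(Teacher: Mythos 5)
Your first two steps are exactly the paper's: your local-to-global constraint is its Lemma~\ref{split}, and your fields ($(\Z/n\Z)^2$ for $n=3,5$ built from two cyclic fields each splitting completely in the other, $(\Z/2\Z)^3$ for $n=4$) are precisely how the paper constructs $F_3$, $F_5$ (inside $\Q(\zeta_{13},\zeta_{103})$, $\Q(\zeta_{11},\zeta_{241})$) and $F_4=\Q(\sqrt{-1},\sqrt{41},\sqrt{73})$. The difference is in step three, and there your $n=4$ plan contains a genuine error. The very splitting hypothesis you impose forbids the configuration you are aiming for: if every place of $\Q$ splits into a multiple of $4$ places of $K=\Q(\sqrt{d_1},\sqrt{d_2},\sqrt{d_3})$, then in the product $w(E/K)=\prod_v w(E/K_v)$ each local root number occurs a multiple of $4$ (in particular an even number of) times, so $w(E/K)=+1$; since $w(E/K)=\prod_\delta w(E^\delta/\Q)$, an even number of the eight twists have root number $-1$, and hence $\sum_\delta \rk E^\delta/\Q$ is even (provably so for any configuration you could actually certify by descent, via Gross--Zagier--Kolyvagin when all analytic ranks are $\le 1$, or via the known $p$-parity theorem for Selmer ranks over $\Q$). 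So ``exactly one twist of rank $1$ and seven of rank $0$'', giving total rank $1$, is unattainable; you must instead arrange the total over the eight twists to be $\equiv 2\bmod 4$. This is what the paper does: $480$a$1$ over $F_4$ has rank $6$. The fix is easy but the target as written would send you searching for an example that does not exist.

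For $n=3,5$ your route is correct but genuinely different from the paper's. You take $E$ of analytic rank $1$, invoke Gross--Zagier--Kolyvagin for $\rk E/\Q=1$, and kill the nontrivial $\chi$-isotypic parts using Kato's theorem ($L(E,\chi,1)\ne 0\Rightarrow (E(K)\otimes\C)^\chi=0$), the nonvanishing being checked by a rigorous numerical computation of the $n^2-1$ twisted $L$-values for one explicit $K$. The paper avoids all of this machinery: it fixes the single curve $480$a$1$ and computes $\rk E/F_3=\rk E/F_5=1$ unconditionally by $2$-descent over the minimal nontrivial (cubic, resp.\ quintic) subfields, using that the rank over $F_n$ is $\rk E/\Q$ plus the sum of $(\rk E/L-\rk E/\Q)$ over those subfields. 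Your approach buys a cleaner conceptual reason for the rank staying at $1$ and would generalize to larger $n$ (as the paper's conditional Theorem~\ref{locp} does), at the cost of relying on Kato and on certified nonvanishing of complex $L$-values; the paper's is more elementary and entirely descent-based.
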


\begin{lemma}
\label{split}
Suppose $\Lambda: \text{(number fields)}\to\Z/n\Z$ satisfies
$\Lambda(K)=\sum_v \lambda(K_v)$ for some function
$\lambda: \text{(local fields)}\to\Z/n\Z$. Then $\Lambda(F)=0$ whenever
$F/K$ is a Galois extension of number fields in which
the number of places above each place of $K$ is a multiple of $n$.
\end{lemma}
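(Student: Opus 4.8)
The plan is to unfold $\Lambda(F)$ using the assumed local formula and then bundle the local contributions according to the place of $K$ lying underneath. First I would write
$$
  \Lambda(F)=\sum_w \lambda(F_w),
$$
where $w$ runs over all places of $F$, and regroup this sum by partitioning the places $w$ into the fibres over the places $v$ of $K$, i.e.\ the sets $\{w : w\mid v\}$.

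The crux is the behaviour of these fibres under the assumption that $F/K$ is Galois. For a fixed place $v$ of $K$, the group $\Gal(F/K)$ acts transitively on the set of places $w\mid v$; given two such places $w,w'$, an element $\sigma\in\Gal(F/K)$ with $\sigma w=w'$ induces an isomorphism of completions $F_w\cong F_{w'}$. Since $\lambda$ depends only on the isomorphism class of the local field, $\lambda(F_w)$ is therefore constant as $w$ ranges over the places above $v$. Writing $g_v$ for the number of such places and $\lambda_v$ for this common value, I would obtain
$$
  \Lambda(F)=\sum_v g_v\,\lambda_v \quad\text{in }\Z/n\Z.
$$

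Finally, the divisibility hypothesis says $n\mid g_v$ for every place $v$ of $K$, so each summand $g_v\lambda_v$ vanishes in $\Z/n\Z$, giving $\Lambda(F)=0$. The only real content lies in the middle step: that the completions of $F$ at places above a fixed place of $K$ are mutually isomorphic as local fields. This is a standard fact, but it is precisely what makes $\lambda$ constant on each fibre and lets the hypothesis $n\mid g_v$ take effect; the Galois assumption is exactly what is needed here, since for a non-Galois $F/K$ the completions $F_w$ over a common $v$ need not be isomorphic and the fibre sums would not simplify. I expect this isomorphism-of-completions point, together with the reminder that $\lambda$ is genuinely an invariant of local fields (depending only on the isomorphism class), to be where the care is needed; the remainder is just bookkeeping of a finite sum modulo $n$.
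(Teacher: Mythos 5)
Your proposal is correct and is exactly the argument the paper intends: the paper's one-line proof ("in the local expression for $\Lambda(F)$ each local field occurs a multiple of $n$ times") is precisely your observation that the Galois group acts transitively on the places above a fixed $v$, so the completions $F_w$ in each fibre are isomorphic and $\lambda$ is constant there, whence each fibre contributes $g_v\lambda_v\equiv 0\bmod n$. You have simply written out the bookkeeping that the paper leaves implicit.
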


\begin{proof}
In the local expression for $\Lambda(F)$
each local field occurs a multiple of $n$ times.
\end{proof}

%There is no function $(k,E)\mapsto\lambda(E/k)\in\Z$ that associates
%an integer to an elliptic curve $E$ over a local field $k$ of
%characteristic 0 such that the following holds:
%for every elliptic curve $E$ over a number field $K$,
%$$
%  \rk E/K = \sum_v \lambda(E/K_v),
%$$
%the product taken over all places of $K$.
%\end{theorem}
%
%\begin{theorem}
%Let $n\in\{3,4,5\}$.
%There is no function $(k,E)\mapsto\lambda(E/k)$ that associates
%an integer to an elliptic curve $E$ over a local field $k$ of
%characteristic 0 such that the following holds:
%for every elliptic curve $E$ over a number field $K$,
%$$
%  \rk E/K \equiv \sum_v \lambda(E/K_v) \mod n.
%$$
%%the product taken over all places of $K$.
%\end{theorem}

\begin{proof}[Proof of Theorem \ref{thmmw345}.]
Take $E/\Q: y^2=x(x+2)(x-3)$, which is 480a1 in Cremona's notation.
Writing $\zeta_p$ for a primitive \nth{p} root of unity, let
$$
  F_n = \left\{\begin{array}{ll}
    \text{the degree 9 subfield of }\Q(\zeta_{13},\zeta_{103})\quad & \text{if } n=3, \cr
    \text{the degree 25 subfield of }\Q(\zeta_{11},\zeta_{241}) & \text{if }n=5, \cr
    \Q(\sqrt{-1},\sqrt{41},\sqrt{73}) & \text{if }n=4.\cr
  \end{array}
  \right.
$$
Because 13 and 103 are cubes modulo one another, and all other primes are
unramified in $F_3$, every place of $\Q$ splits into 3 or 9 in $F_3$.
Similarly $F_4$ and $F_5$ also satisfy the assumptions of Lemma \ref{split}
with $n=4, 5$. Hence, if
\pagebreak
the Mordell-Weil rank modulo $n$ were a sum of local
invariants, it would be $0\in\Z/n\Z$ for $E/F_n$.
However, 2-descent shows that $\rk E/F_3=\rk E/F_5=1$ and $\rk E/F_4=6$
(e.g. using Magma \cite{Magma}, over all minimal non-trivial subfields of $F_n$).
\end{proof}

\begin{remark}
\label{lfun}
%As every prime splits into at least 4 in the field
%$F=F_4=\Q(\sqrt{-1},\sqrt{41},\sqrt{73})$
%For example, the $L$-series of $E=$480a1 over $F$ is
%
The $L$-series of the curve $E\!=\,$480a1 used in the proof
over $F=F_4=\Q(\sqrt{-1},\sqrt{41},\sqrt{73})$
%any elliptic curve $E/\Q$ over $F$
%used in the proof, the $L$-series of any elliptic curve $E/\Q$ over $F$
is formally a 4th power, in the sense that each Euler factor is:
$$
L(E/F,s) = \textstyle
       1
       \!\cdot\! \bigl( \frac {1}{1-3^{-2s}} \bigr)^4
       \bigl( \frac {1}{1-5^{-2s}} \bigr)^4
       \bigl( \frac {1}{1 + 14\cdot 7^{-2s} + 7^{2-4s}} \bigr)^4
       \bigl( \frac {1}{1 + 6\cdot 11^{-2s} + 11^{2-4s}} \bigr)^4\!\squeezedcdots.
%       \bigl( \frac {1}{1 + 14\cdot 7^{-2s} + 49\cdot 7^{-4s}} \bigr)^4
%       \bigl( \frac {1}{1 + 6\cdot 11^{-2s} + 121\cdot 11^{-4s}} \bigr)^4\cdots
%%      \bigl( \frac {1}{1 + 22\cdot 13^{-2s} + 169\cdot 13^{-4s}} \bigr)^4
$$
However, it is not a 4th power of an entire function, as it vanishes
to order~6 at $s=1$.
Actually,
it is not even a square of an entire function:
it has a simple zero at $1+2.1565479...\,i$.

In fact,
by construction of $F$, for any $E/\Q$ the $L$-series $L(E/F,s)$
is formally a 4th power and vanishes to even order at $s=1$ by the
functional equation. Its square root has analytic
continuation to a domain including $\Re s>\frac 32$,
$\Re s<\frac 12$ and the real axis, and satisfies a functional equation
$s\leftrightarrow 2-s$, but it is not clear whether it has an arithmetic
meaning.
\end{remark}

\begin{lemma}
\label{quad}
Suppose an invariant $\Lambda\in\Z/2^{k}\Z$ is a sum of local invariants.
Let $F=K(\sqrt{\alpha_1},...,\sqrt{\alpha_{m}})$ be a multi-quadratic extension
%of degree $2^{k+1}$
in which every
prime of $K$ splits into a multiple of $2^k$ primes of $F$.
Then for every elliptic curve $E/K$,
$$\Lambda(E/K)+\sum\limits_D\Lambda(E_D/K)=0,$$
where the sum is taken over the quadratic subfields $K(\sqrt D)$ of $F/K$,
and $E_D$ denotes the quadratic twist of $E$ by $D$.
\end{lemma}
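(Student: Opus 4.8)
The plan is to reduce the global identity to a local computation at each place of $K$, in the spirit of Lemma~\ref{split}. Let $V\subseteq K^*/(K^*)^2$ be the $\F_2$-subspace spanned by $\alpha_1,\dots,\alpha_m$, so that $\dim_{\F_2}V=m$ and $F=K(\sqrt V)$. Kummer theory identifies $\Gal(F/K)$ with the dual of $V$, and the $2^m-1$ nontrivial classes $D\in V$ correspond bijectively to the quadratic subfields $K(\sqrt D)$ of $F$, while the trivial class gives $E_1=E$. Hence the quantity to be shown to vanish is exactly $\sum_{D\in V}\Lambda(E_D/K)$. Writing $\Lambda(E_D/K)=\sum_v\lambda(E_D/K_v)$ and interchanging the two (finite) sums, it suffices to prove that for every place $v$ of $K$,
$$ S_v:=\sum_{D\in V}\lambda(E_D/K_v)=0\in\Z/2^{k}\Z. $$

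Now I would fix $v$ and exploit that the quadratic twist $E_D/K_v$, and hence $\lambda(E_D/K_v)$, depends only on the image of $D$ in $K_v^*/(K_v^*)^2$. Thus the map $V\to K_v^*/(K_v^*)^2$ lets me group the terms of $S_v$ by their image: writing $\bar V_v$ for the image and $d_v=\dim_{\F_2}\bar V_v$, every nonempty fibre is a coset of $\ker(V\to K_v^*/(K_v^*)^2)$ and so has exactly $2^{m-d_v}$ elements. Therefore
$$ S_v=2^{m-d_v}\sum_{\delta\in\bar V_v}\lambda(E_\delta/K_v). $$
The point is that $2^{m-d_v}$ is precisely the number of primes of $F$ above $v$: the local extension is $K_v(\sqrt{\bar V_v})$, whose Galois group is the decomposition group $G_v$ of order $2^{d_v}$ by Kummer theory, so the number of primes above $v$ equals $[\Gal(F/K):G_v]=2^{m-d_v}$. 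By hypothesis this number is a multiple of $2^{k}$, whence $S_v=0$ in $\Z/2^{k}\Z$.

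Summing the vanishing of $S_v$ over all places (only finitely many contribute) yields $\sum_{D\in V}\Lambda(E_D/K)=0$, which is the claimed identity. The step needing care is the matching of the fibre size $2^{m-d_v}$ with the number of primes of $F$ above $v$: this is the Kummer-theoretic translation between the image of $V$ in the local square classes and the decomposition group, and it is exactly what lets the splitting hypothesis do its work. The remaining inputs---that $\lambda(E_D/K_v)$ depends only on the local square class of $D$, and that $\lambda$ is nonzero at only finitely many places---are formal.
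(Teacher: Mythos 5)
Your proof is correct and is essentially the paper's own argument written out in full: the paper's one-sentence proof simply asserts that in the local expansion of the left-hand side each local term occurs a multiple of $2^k$ times, which is precisely the fibre count $2^{m-d_v}=[\Gal(F/K):G_v]=\#\{\text{primes above }v\}$ that you make explicit via Kummer theory and decomposition groups. No gaps.
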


\begin{proof}
In the local expression for the left-hand side of the formula
each local term ($\lambda$ of a given elliptic curve
over a given local field)
occurs a multiple of $2^k$ times.
\end{proof}

\begin{theorem}
\label{other}
Each of the following is not a sum of local invariants:
\begin{itemize}
\item $\dimF2\sha(E/K)[2]\mod 4$,                   %? dim sha[2] is local
\item $\rk(E/K)+\dimF2\sha(E/K)[2]\mod 4$,
\item $\dimF2\Sel_2 E/K\mod 2$,
\item $\dimFp E(K)[p]\mod 2$ for any prime $p$.
\end{itemize}
Here $\sha$ is the Tate-Shafarevich group and $\Sel_2$ is the 2-Selmer group.
\end{theorem}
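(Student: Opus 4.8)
The plan is to treat all four invariants through the two obstruction lemmas already established: Lemma \ref{split} in the case $n=2$, and Lemma \ref{quad}. Since every invariant in the list takes values in $\Z/2\Z$ or $\Z/4\Z$, it is a $2$-power obstruction that we need. The one structural identity I will use repeatedly is the $2$-descent exact sequence, which gives
$$
  \dimF2\Sel_2 E/K = \rk E/K + \dimF2 E(K)[2] + \dimF2\sha(E/K)[2],
$$
together with the fact that the Galois module $E[2]$ is unchanged under quadratic twisting (the automorphism $-1$ acts trivially on $2$-torsion), so that $\dimF2 E_D(K)[2]=\dimF2 E(K)[2]$ for every $D$. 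The first identity lets me pass freely between the $\Sel_2$, $\sha[2]$ and $\rk+\sha[2]$ invariants, and the second makes the torsion contribution disappear in the twisting sums below.

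For the three $\F_2$-flavoured invariants (the first three bullets) I will use Lemma \ref{quad}. For $\dimF2\Sel_2 E/K\bmod 2$ (here $k=1$) take a biquadratic $F=K(\sqrt\alpha,\sqrt\beta)$ in which every prime of $K$ splits into an even number of primes; such fields exist, e.g. over $\Q$ one may take $\alpha,\beta$ to be primes $\equiv 1\bmod 8$ that are squares modulo one another, so that every prime, the dyadic place and the infinite place all split into a multiple of~$2$. Lemma \ref{quad} then forces
$$
  \dimF2\Sel_2 E/K + \dimF2\Sel_2 E_\alpha/K + \dimF2\Sel_2 E_\beta/K + \dimF2\Sel_2 E_{\alpha\beta}/K \equiv 0 \bmod 2,
$$
and it suffices to exhibit one $E$ for which the four $2$-Selmer ranks sum to an odd number, a finite $2$-descent computation. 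For $\dimF2\sha[2]\bmod 4$ and $\rk+\dimF2\sha[2]\bmod 4$ (now $k=2$) I will instead use a triquadratic $F$ in which every prime splits into a multiple of~$4$, for instance the field $\Q(\sqrt{-1},\sqrt{41},\sqrt{73})$ from the proof of Theorem \ref{thmmw345}; its seven quadratic subfields give seven twists $E_D$. Using the two identities above, the twisting sum for $\rk+\dimF2\sha[2]$ collapses, modulo~$4$, to $\sum_c\dimF2\Sel_2(c)$ over the eight curves $c\in\{E\}\cup\{E_D\}$ (the eight copies of $\dimF2 E[2]$ contribute~$0$ modulo~$4$), so this invariant fails as soon as that sum is $\not\equiv 0\bmod 4$. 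If in addition $E$ has no rational $2$-torsion and $E$ and all seven twists have rank~$0$ — which one arranges by a search among curves of analytic rank~$0$ — then $\dimF2\Sel_2(c)=\dimF2\sha(c)[2]$ for each $c$, so the same sum computes the twisting sum for $\dimF2\sha[2]$ as well and one example refutes both. Concretely one wants an odd number of the eight curves to have $\dimF2\sha(c)[2]\equiv 2\bmod 4$.

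The fourth bullet, $\dimFp E(K)[p]\bmod 2$, I will handle uniformly in $p$ by Lemma \ref{split} with $n=2$: it is enough to produce a number field $K$, a curve $E/K$, and an even-splitting Galois extension $F/K$ with $\dimFp E(F)[p]$ odd, for then the invariant of $E/F$ is nonzero although the lemma would force it to vanish. Take any $E/\Q$ whose mod-$p$ representation has image all of $\GL_2(\F_p)$ (which exists for every $p$), choose a point $P$ of order~$p$, and set $K=\Q(P)$; since the stabiliser of a nonzero vector in $\GL_2(\F_p)$ fixes only the line through it, $E(K)[p]=\langle P\rangle$ is $1$-dimensional. Now let $F=K(\sqrt\alpha,\sqrt\beta)$ be a biquadratic even-splitting extension of $K$ (such an $F$ exists over any number field, by choosing $\alpha,\beta$ to be squares at the dyadic and ramified places and using Chebotarev to split the rest). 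Because $[F:K]$ is a power of~$2$ while $\Q(E[p])/K$ has degree divisible by~$p$, no new $p$-torsion appears, so $\dimFp E(F)[p]=1$ is odd, giving the contradiction. For the small primes $p\in\{2,3,5,7\}$ one can stay over $\Q$, starting from a curve with a rational $p$-torsion point (for $p=2$, a curve with exactly one rational $2$-torsion point and an $F$ avoiding the quadratic field that splits the other two).

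The main obstacle is twofold. For the first three invariants it is the explicit search: one must actually find curves whose twists violate the forced $\Z/2^{k}$-relation, and the delicate point is separating $\sha[2]$ from the Selmer rank, since $2$-descent computes $\dimF2\Sel_2$ directly but extracting $\dimF2\sha[2]$ requires knowing the rank — which is why arranging all eight curves to have rank~$0$ (so that the two identities make everything Selmer-computable and unconditional) is the cleanest route. For the fourth invariant the genuine subtlety is that the base field cannot be $\Q$: if $F/\Q$ is Galois then $E(F)[p]=E[p]^{G_F}$ is a $G_\Q$-submodule, so being $1$-dimensional would force a $G_\Q$-stable line, i.e. a rational $p$-isogeny, which does not exist for most $p$; hence one must pass to $K=\Q(P)$ and verify that an even-splitting biquadratic extension of $K$ really exists, uniformly in $p$.
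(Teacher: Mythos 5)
Your overall strategy is the paper's: Lemma \ref{quad} for the two $\bmod\ 4$ invariants, Lemma \ref{split} for the torsion parity, the $2$-descent identity $\dimF2\Sel_2=\rk+\dimF2 E(K)[2]+\dimF2\sha[2]$, and, for the last bullet, passage to $K=\Q(P)$ for a curve with surjective mod-$p$ image (the paper takes 24a4 and $F=K(\sqrt{-1},\sqrt{17})$, concluding $\dimFp E(F)[p]=1$ because $F$ cannot contain $\mu_p$; your degree argument is an equivalent and clean justification, and that part of your proposal is complete). The main defect elsewhere is that for the first three bullets you never exhibit the curves: the theorem is precisely an existence statement, and ``a finite $2$-descent computation'' or ``a search among curves of analytic rank $0$'' is the content, not a detail. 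The paper's witnesses are $E=$ 37a1 with $F=\Q(\sqrt{-1},\sqrt{17},\sqrt{89})$ --- four twists of rank $1$, four of rank $0$, exactly one with $|\sha[2]|=4$, so both $\bmod\ 4$ sums equal $2$ --- and $E=$ 14a1 over $\Q(\sqrt{-1},\sqrt{17})$, where $\dimF2\Sel_2(E/F)=3$. Note also that your extra requirement that all eight twists have rank $0$ is both unverified and unnecessary: in the paper's example the ranks sum to $4\equiv 0\bmod 4$, which already makes the two $\bmod\ 4$ invariants agree.

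More seriously, your plan for the third bullet appears unrealizable. You reduce to finding $E/\Q$ and an even-splitting biquadratic $F=\Q(\sqrt\alpha,\sqrt\beta)$ with $\sum_{D}\dimF2\Sel_2(E_D/\Q)$ odd, the sum over $D\in\{1,\alpha,\beta,\alpha\beta\}$. But by Kramer's theorem, in the form used by Mazur and Rubin, $\dimF2\Sel_2(E_D/\Q)-\dimF2\Sel_2(E/\Q)$ is, modulo $2$, a sum over places $v$ of local terms depending only on $E/\Q_v$ and on the class of $D$ in $\Q_v^\times/(\Q_v^\times)^2$, and vanishing when $D$ is a square in $\Q_v$. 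Your splitting hypothesis says that at each $v$ at least one of $\alpha,\beta,\alpha\beta$ is a local square; the other two then cut out the same local extension and contribute equal local terms, so every place contributes $0$ modulo $2$ and the four-term sum is always even. This is exactly why the paper does \emph{not} pass to twists for this invariant but applies Lemma \ref{split} directly to $\dimF2\Sel_2(E/F)$ over the biquadratic field itself --- a quantity which is not the sum of the twists' Selmer ranks, the discrepancy between the two being the whole point.
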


\begin{proof}
The argument is similar to that of Theorem \ref{thmmw345}:

For the first two claims, apply Lemma \ref{quad}
to $E: y^2+y=x^3-x$ (37a1) with $K=\Q$ and $F=\Q(\sqrt{-1},\sqrt{17},\sqrt{89})$.
The quadratic twists of $E$ by $1,-17,-89,17\cdot 89$ have rank 1, and
those by $-1,17,89,-17\cdot 89$ have rank~0; the twist by $-17\cdot 89$
has $|\sha[2]|=4$ and the other seven have trivial $\sha[2]$.
The sum over all twists is therefore 2 mod 4 in both cases, so
they are not sums of local invariants.

\pagebreak

For the parity of the 2-Selmer rank and of $\dim E[2]$ apply Lemma \ref{split}
to $E: y^2+xy+y=x^3+4x-6$ (14a1) with $K=\Q$, $F=\Q(\sqrt{-1},\sqrt{17})$
and $n=2$. The 2-torsion subgroup of $E/F$ is of order 2
and its 2-Selmer group over $F$ is of order 8.

Finally, for $\dimFp E[p]\mod 2$ for $p>2$ take any elliptic curve $E/\Q$
with $\Gal(\Q(E[p])/\Q)\iso\GL_2(\F_p)$, e.g. $E: y^2=x^3-x^2+x$ (24a4),
see \cite{SerP}~5.7.2.
Let $K$ be the field obtained by adjoining to $\Q$ the coordinates of one
$p$-torsion point and $F=K(\sqrt{-1},\sqrt{17})$. Because
$F$ does not contain the \nth{p} roots of unity, $\dimFp E(F)[p]=1$.
So, by Lemma \ref{split} the parity of this dimension
is not a sum of local invariants.
\end{proof}

\begin{remark}
\label{cubic}
The functional equation expresses the parity of the analytic rank
as a sum of local invariants not only for elliptic curves
(or abelian varieties), but also for their twists by self-dual
Artin representations. However, for the parity of the rank of
non-self-dual twists there is presumably no such expression.

For example,
let $\chi$ be a non-trivial Dirichlet character of $(\Z/7\Z)^\times$
of order~3.
Then there is no function $(k,E)\mapsto\lambda(E/k)\in\Z$
defined for elliptic curves over local fields $k$,
such that for all elliptic curves $E/\Q$,
$$
  \ord_{s=1} L(E,\chi,s) \equiv \smash{\sum_v} \lambda(E/\Q_v) \mod 2.
$$
To see this, take
$$
  E/\Q: y^2+y=x^3+x^2+x \text{ (19a3)}, \quad
  K=\Q, \quad F=\Q(\sqrt{-1},\sqrt{17})
$$
and apply Lemma \ref{quad}.
The twists of $E, E_{-1}$ and $E_{17}$ by $\chi$ have analytic rank~0,
and that of $E_{-17}$ has analytic rank 1, adding up to an odd number.
\end{remark}

%\newpage

\section{Expectations}
%%%%%%%%%%%%%%%%%%%%%%%%%%%%%%%%%%%%%%%%%%%%%%%

We expect the Mordell-Weil rank modulo $n$ not to be a sum of local terms
for any $n>2$ and any class of elliptic curves.
%and any fixed elliptic curve, not just $E=480a1$
%and $n=3,4,5$ used in the proof of Theorem \ref{thmmw345}.
Theorems \ref{locp} and \ref{loc4} below show that this
is a consequence of modularity of elliptic curves, the known cases of
the Birch--Swinnerton-Dyer conjecture and standard conjectures for
analytic ranks of elliptic curves.

\begin{notation}
For a prime $p$ we write $\Sigma_p$ for the set of all Dirichlet characters
of order $p$. We say that $S\subset\Sigma_p$ has density $\alpha$ if
$$
  \lim_{x\to\infty}
    \frac{ \{\chi: \chi\in S \>|\> N(\chi)<x\} }
         { \{\chi: \chi\in \Sigma_p \>|\> N(\chi)<x\} } = \alpha,
$$
where $N(\chi)$ denotes the conductor of $\chi$.
\end{notation}

\begin{conjecture}[Weak form of \cite{DFK2} Conj. 1.2]
\label{conjmin}
For $p>2$ and every elliptic curve $E/\Q$,
those $\chi\in \Sigma_p$ for which $L(E,\chi,1)=0$
have density 0 in $\Sigma_p$.
\end{conjecture}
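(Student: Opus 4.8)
The plan is to recast the conjecture as a nonvanishing statement for the family of twisted $L$-functions $\{L(E,\chi,s):\chi\in\Sigma_p\}$, ordered by the conductor $N(\chi)$, and to attack it by the method of mollified moments. The structural reason to expect vanishing on a set of density $0$ is that for $p>2$ these twists are not self-dual: $L(E,\bar\chi,s)$ is the complex-conjugate $L$-function, the functional equation relates $L(E,\chi,s)$ to $L(E,\bar\chi,2-s)$ through a root number $W(E,\chi)$ of absolute value $1$ but not confined to $\pm1$, and so there is no parity obstruction forcing the central value to vanish (in sharp contrast to the quadratic case underlying Remark \ref{cubic}). Heuristically $L(E,\chi,1)$ then behaves like a generic nonzero value, and one expects $L(E,\chi,1)\neq0$ for almost all $\chi$.

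First I would record an approximate functional equation writing $L(E,\chi,1)$ as a smoothed Dirichlet sum $\sum_n a_n\chi(n)\,n^{-1}V(n/X)$ in the Hecke eigenvalues $a_n$ of $E$, with cutoff length $X$ of the order of the square root of the analytic conductor. I would then introduce a mollifier $M(\chi)=\sum_{m\le y}x_m\chi(m)\,m^{-1}$ chosen to approximate $L(E,\chi,1)^{-1}$, and seek asymptotics for the first moment $\sum_{N(\chi)<x}L(E,\chi,1)M(\chi)$ and the mollified second moment $\sum_{N(\chi)<x}|L(E,\chi,1)M(\chi)|^2$ as $\chi$ ranges over $\Sigma_p$. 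By Cauchy--Schwarz, a lower bound on the ratio of the square of the first moment to the second moment produces nonvanishing for a positive proportion of $\chi$.

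The difficulty is twofold, and it is exactly what keeps the statement conjectural. First, evaluating these moments requires summing characters of a fixed order $p$ ordered by conductor, a sparse family for which orthogonality is not clean; the off-diagonal contributions are governed by averages of Gauss sums of order-$p$ characters and lie beyond current technology for general $p$ (for $p=2$ the analogous second moment is classical but still only yields a positive proportion, and there vanishing genuinely occurs with positive density because $W(E,\chi)=\pm1$; for $p=3$ only partial results exist). Second, even a complete moment computation of this shape gives a \emph{positive proportion} of nonvanishing, whereas the conjecture demands density $0$ of vanishing, i.e. density-$1$ nonvanishing --- a strictly harder regime, open even for the central values of the family of all Dirichlet characters modulo $q$. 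I would therefore expect that a full proof must await either a substantial advance in the harmonic analysis of these thin character families, or genuinely new automorphic input (for instance via Rankin--Selberg convolutions involving the symmetric square of $E$) capable of controlling both the off-diagonal terms and the passage from positive proportion to density $1$; under GRH one might push the method further, but density-$1$ nonvanishing for every $E/\Q$ and every $p>2$ would remain out of reach.
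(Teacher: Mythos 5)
The statement you were asked about is a \emph{conjecture}, not a theorem: the paper offers no proof of it, cites it as a weak form of Conjecture 1.2 of David--Fearnley--Kisilevsky \cite{DFK2}, and uses it only as a hypothesis in Theorem \ref{locp}. So there is no ``paper's own proof'' to compare against, and your text is correctly not a proof either --- it is a strategy sketch that you yourself acknowledge cannot currently be carried out. The gap is therefore total and is exactly where you locate it: the mollified-moment method requires asymptotics for first and second moments of $L(E,\chi,1)$ over the sparse family of order-$p$ characters ordered by conductor, whose off-diagonal terms (averages of order-$p$ Gauss sums) are beyond present technology for general $p$; and even a successful moment computation would yield only a positive proportion of nonvanishing, whereas the conjecture asserts density-one nonvanishing, a qualitatively stronger statement that is open even in much better-understood families.

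As a piece of motivation your account is sound and consistent with why the conjecture is believed: for $p>2$ the twists are not self-dual, the root number is a unit complex number rather than $\pm1$, so there is no parity obstruction forcing $L(E,\chi,1)=0$, and random-matrix heuristics predict vanishing only on a density-zero set. But none of this constitutes progress toward a proof, and you should be explicit that what you have written is a heuristic justification plus an obstruction analysis, not an argument with a missing lemma that could be filled in. If the goal were to engage with the paper itself, the relevant mathematical content is not this conjecture but the deduction in Theorem \ref{locp}, where the conjecture is combined with Lemma \ref{density}, nonvanishing results of Bump--Friedberg--Hoffstein, Murty--Murty and Waldspurger, and Zhang's theorem to rule out a local formula for the rank modulo $p$.
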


\pagebreak

\begin{theorem}
\label{locp}
Let $E/\Q$ be an elliptic curve and $p$ an odd prime.
Assuming Conjecture \ref{conjmin},
there is no function $k\mapsto\lambda(E/k)\in\Z/p\Z$
defined for local fields $k$ of characteristic 0, such that
for every number field~$K$,
$$
  \rk E/K \equiv \sum_v \lambda(E/K_v) \mod p.
$$
\end{theorem}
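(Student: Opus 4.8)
The plan is to contradict the existence of such a local expression by exhibiting, for the fixed curve $E/\Q$, a family of number fields $K$ in which $E$ has rank divisible by $p$ (forced by Lemma~\ref{split}) but where a direct rank computation—via the twisting philosophy behind Conjecture~\ref{conjmin}—shows the rank is \emph{not} divisible by $p$. Concretely, I would build $K$ as an abelian extension of $\Q$ with Galois group large enough that Lemma~\ref{split} applies, and then decompose $\rk E/K$ into contributions from twists of $E$ by the characters of $\Gal(K/\Q)$.

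\medskip\noindent
\textbf{Step 1: Reduce to twists via the rank decomposition.} Suppose $K/\Q$ is abelian with group $G$. Then $E(K)\tensor\Q$ decomposes as a $G$-representation, and correspondingly
$$
  \rk E/K = \sum_{\rho} \langle \rho, E(K)\tensor\C\rangle \cdot \dim\rho,
$$
the sum over irreducible complex characters $\rho$ of $G$. Assuming modularity and the relevant known cases of Birch--Swinnerton-Dyer, the multiplicity $\langle\rho, E(K)\tensor\C\rangle$ equals the analytic rank $\ord_{s=1}L(E,\rho,s)$. So I can reconstruct $\rk E/K$ from the vanishing orders of the twisted $L$-functions $L(E,\chi,s)$ as $\chi$ ranges over the characters of $G$.

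\medskip\noindent
\textbf{Step 2: Engineer $K$ so that Lemma~\ref{split} forces $p\mid\rk E/K$.} I want $G$ to be a $p$-group (say $(\Z/p\Z)^m$) chosen so that each rational prime splits into a multiple of $p$ primes of $K$—this is the decomposition-group condition from the proof of Theorem~\ref{thmmw345}, arranged by taking the compositum of cyclic degree-$p$ fields ramified at primes that are $p$th powers modulo one another. Then Lemma~\ref{split} (applied to $\Lambda=\rk E/\cdot \bmod p$) says: \emph{if} the rank mod $p$ were a sum of local invariants, then $\rk E/K\equiv 0\pmod p$.

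\medskip\noindent
\textbf{Step 3: Use Conjecture~\ref{conjmin} to contradict this.} Here is where the density input enters and where the main difficulty lies. The nontrivial characters of $G=(\Z/p\Z)^m$ come in Galois orbits of size $p-1$, each orbit contributing $p-1$ to $\rk E/K$ whenever the corresponding twist has positive analytic rank, and contributing $0$ otherwise; the trivial character contributes $\rk E/\Q$. Thus
$$
  \rk E/K = \rk E/\Q + (p-1)\cdot\#\{\text{orbits }\chi \text{ with } L(E,\chi,1)=0\}.
$$
Modulo $p$ this reads $\rk E/K \equiv \rk E/\Q - \#\{\text{vanishing orbits}\}\pmod p$. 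Conjecture~\ref{conjmin} asserts that twists of order $p$ with $L(E,\chi,1)=0$ have density $0$; the hard part is to leverage this to guarantee I can \emph{choose} $m$ and the ramification so that the number of vanishing orbits among the characters of $G$ does \emph{not} land in the single residue class mod $p$ needed to make $\rk E/K\equiv 0$. Since I am free to enlarge $m$ and to vary which ramified primes I include—subject only to the splitting condition of Step~2—the density-$0$ statement should let me arrange almost all orbits to be non-vanishing, pinning $\#\{\text{vanishing orbits}\}$ to a controlled value (ideally a constant, coming from $\rk E/\Q$ and finitely many sporadic vanishings) that I can force off $0\bmod p$. The genuine obstacle is converting the asymptotic density-$0$ input into an \emph{existence} statement for a single field $K$ with the exact rank residue I need, rather than merely a statement about proportions; this requires a careful counting of Galois orbits of conductor-bounded characters cutting out $K$ and an argument that the sporadic vanishing twists cannot conspire to always restore divisibility by $p$.
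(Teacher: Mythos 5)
Your outline gets the setup right---build an $\F_p^d$-extension $F/\Q$ avoiding the density-zero set of vanishing twists, invoke Lemma~\ref{split} to force $\rk E/F\equiv 0 \bmod p$, and then contradict this by an actual rank computation---but the contradiction itself is exactly the step you leave open, and as stated your Step~3 cannot close it. If you arrange (as the density argument naturally does) that \emph{no} nontrivial character of $\Gal(F/\Q)$ gives a vanishing twist, your own formula yields $\rk E/F \equiv \rk E/\Q \pmod p$; when $\rk E/\Q\equiv 0\bmod p$ (e.g.\ rank $0$) there is no contradiction at all, and trying instead to force a prescribed nonzero number of vanishing orbits runs directly against Conjecture~\ref{conjmin}, which only controls the non-vanishing direction. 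The paper's missing idea is a quadratic twist trick: first choose $D$ so that $E_D$ has analytic rank exactly $1$ (Bump--Friedberg--Hoffstein, Murty--Murty, Waldspurger --- an unconditional nonvanishing input, not a density statement), build $F$ with $\Gal(F/\Q)\iso\F_p^3$ avoiding the order-$p$ characters with $L(E_D,\chi,1)=0$ via Lemma~\ref{density}, and then compare the two fields $F$ and $F(\sqrt D)$: the local-invariance hypothesis forces both $\rk E/F$ and $\rk E/F(\sqrt D)$ to be $0\bmod p$, hence $\rk E_D/F=\rk E/F(\sqrt D)-\rk E/F\equiv 0\bmod p$, whereas $L(E_D/F,s)=\prod_\chi L(E_D,\chi,s)$ has a simple zero at $s=1$ and Zhang's theorem (over the totally real field $F$) gives $\rk E_D/F=1$. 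This maneuver isolates a quantity whose value is pinned to $1$ unconditionally given the analytic input, which is what your approach lacks.

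Two further points. First, your Step~1 assumes that the multiplicity of each character $\rho$ in $E(K)\tensor\C$ equals $\ord_{s=1}L(E,\rho,s)$ for \emph{all} $\rho$; this is a wide-open conjecture, far beyond the ``known cases of BSD,'' and the paper deliberately avoids it by only ever needing the analytic-rank-$\le 1$ case (Zhang). Second, the splitting condition in your Step~2 need not be engineered by congruence conditions on ramified primes: once $d\ge 3$, every place of $\Q$ automatically splits into a multiple of $p$ places in an $\F_p^d$-extension, because no $\Q_l$ admits an $\F_p^3$-extension, so every decomposition group has index at least $p$. The genuinely nontrivial conversion of ``density $0$'' into ``there exists a suitable field'' is Lemma~\ref{density}, which you gesture at but would still need to prove (the paper does it by a short induction on $d$).
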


\begin{lemma}
\label{density}
Let $p$ be a prime number and $S\subset \Sigma_p$ a set of characters
of density 0 in $\Sigma_p$. For every $d\ge 1$ there is an abelian extension $F_d/\Q$
with Galois group $G\iso \F_p^d$, such that no characters of $G$ are in $S$.
\end{lemma}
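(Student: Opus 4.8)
The plan is to construct $F_d$ by induction on $d$, starting from $F_0=\Q$ and adjoining one cyclic degree-$p$ extension at each step. Assume $F_{d-1}/\Q$ has been built with $\Gal(F_{d-1}/\Q)\iso\F_p^{d-1}$ and with every nontrivial character of $\Gal(F_{d-1}/\Q)$ lying outside $S$; write $M$ for its conductor and $\widehat{F_{d-1}}$ for its finite group of Dirichlet characters. I would then look for one more character $\psi$ of order $p$ with $\psi\notin\widehat{F_{d-1}}$ such that $\phi\psi^j\notin S$ for all $\phi\in\widehat{F_{d-1}}$ and all $j\in\{1,\dots,p-1\}$. If $K$ denotes the cyclic degree-$p$ field cut out by $\psi$, then $\psi\notin\widehat{F_{d-1}}$ forces $K\cap F_{d-1}=\Q$ (as $K$ has no proper subfields), so $F_d:=F_{d-1}K$ has $\Gal(F_d/\Q)\iso\F_p^{d}$. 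The characters of $F_d$ are precisely the products $\phi\psi^j$ with $\phi\in\widehat{F_{d-1}}$, $0\le j\le p-1$; for $j=0$ these are the characters of $F_{d-1}$, which avoid $S$ by induction, and for $j\ne 0$ they are nontrivial (hence of order $p$) and avoid $S$ by the choice of $\psi$. This completes the inductive step, the base case $d=0$ being vacuous.

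The remaining task---producing $\psi$---is a counting argument. For each fixed pair $(\phi,j)$ with $j\ne 0$, the assignment $\psi\mapsto\phi\psi^j$ is injective on order-$p$ characters and enlarges the conductor only by a bounded factor, since $N(\phi\psi^j)$ divides $N(\phi)N(\psi)\le M\,N(\psi)$. Hence the number of $\psi$ with $N(\psi)<x$ for which $\phi\psi^j\in S$ is at most $\#\{\chi\in S:N(\chi)<Mx\}$. Summing over the finitely many pairs $(\phi,j)$ and invoking the density-$0$ hypothesis on $S$, the number of ``bad'' $\psi$ of conductor below $x$ is $o(\#\{\chi\in\Sigma_p:N(\chi)<Mx\})$. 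Now the conductor-counting function of $\Sigma_p$ grows regularly: a standard Selberg--Delange estimate (only $p$ and the primes $q\equiv 1\pmod p$, of density $1/(p-1)$, ramify, each such $q$ carrying $p-1$ characters of order $p$) gives $\#\{\chi\in\Sigma_p:N(\chi)<x\}\asymp x$, so replacing $x$ by $Mx$ alters the count by only a bounded factor. Thus the bad $\psi$ form a set of density $0$ among all order-$p$ characters, whereas the admissible $\psi$ (all but the finitely many in $\widehat{F_{d-1}}$) form a set of density $1$; a valid---indeed generic---choice of $\psi$ therefore exists.

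The hard part is really the mixing of conductors in the inductive step: the newly created characters of $F_d$ are the twisted products $\phi\psi^j$, not $\psi$ itself, so one must ensure that none of these $p^{d-1}(p-1)$ products falls into $S$. What rescues the argument is that twisting by the fixed finite group $\widehat{F_{d-1}}$ distorts conductors by at most the constant $M$, and that $\#\{\chi\in\Sigma_p:N(\chi)<x\}$ is regularly varying; together these make the density-$0$ condition on $S$ stable under both the bounded conductor distortion and the finitely many twists. Given this stability, the density-$0$ family of bad characters cannot exhaust the density-$1$ supply of candidates, and the construction proceeds for every $d$.
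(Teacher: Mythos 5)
Your proof is correct and follows essentially the same route as the paper's: an inductive construction in which, at each step, one adjoins a cyclic degree-$p$ field cut out by a character avoiding a density-zero ``bad'' set built from $S$ and the previously chosen characters. The only substantive difference is that you spell out (via the conductor bound $N(\phi\psi^j)\mid N(\phi)N(\psi)$, injectivity of $\psi\mapsto\phi\psi^j$, and the regular growth $\#\{\chi\in\Sigma_p:N(\chi)<x\}\asymp x$) why twisting $S$ by finitely many fixed characters preserves density zero --- a point the paper's proof asserts without justification.
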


\begin{proof}
Without loss of generality, we may assume that if $\chi\in S$ then
$\chi^n\in S$ for $1\le n<p$.
When $d=1$, take $F_1$ to be the kernel of any $\chi\in \Sigma_p\setminus S$.
Now proceed by induction, supposing that $F_{d-1}$ is constructed.
Writing $\Psi$ for the set of characters of $\Gal(F_{d-1}/\Q)$, the set
$$
  S_d = \bigcup_{\psi\in\Psi} \{\phi\psi: \phi \in S\}
$$
still has density 0. Pick any $\chi\in \Sigma_p\setminus S_d$, and set $F_d$
to be the compositum of $F_{d-1}$ and the degree $p$ extension of $\Q$
cut out by $\chi$.
It is easy to check that no character of $\Gal(F_d/\Q)$ lies in $S$.
\end{proof}

\begin{proof}[Proof of Theorem \ref{locp}.]
Pick a quadratic field $\Q(\sqrt D)$ such that
the quadratic twist $E_D$ of $E$ by $D$ has analytic
rank 1, which is possible by \cite{BFH,MM,Wal}.
%By Kolyvagin's theorem \cite \cite{Kol}, $E_D$ (and therefore Mordell-Weil)
%(\cite{Kol} and \cite{BFH,MM,Wal}).
By Conjecture \ref{conjmin},
the set $S$ of Dirichlet characters $\chi$ of order $p$ such that
$L(E_D,\chi,1)=0$ has density 0. Apply Lemma \ref{density}
to $S$ with $d=3$. Every place of $\Q$ splits in the resulting field $F=F_3$
into a multiple of $p$ places ($\Q_l$~has no $\F_p^3$-extensions, so every
prime has to split).

Arguing by contradiction, suppose the rank of $E$ mod $p$ is a sum
of local invariants. Because in $F/\Q$ and therefore also in
$F(\sqrt D)/\Q$ every place
splits into a multiple of $p$ places,
$$
  \rk E/F \equiv 0\mod p \quad\text{and}\quad   \rk E/F(\sqrt D) \equiv 0\mod p
$$
by Lemma \ref{split}.
Therefore $\rk E_D/F=\rk E/F(\sqrt D)-\rk E/F$ is a multiple of~$p$.
On the other hand,
$$
  L(E_D/F,s) = \prod_{\chi} L(E_D,\chi,s),
$$
the product taken over the characters of $\Gal(F/\Q)$. By construction,
it has a simple zero at $s=1$.
Because $F$ is totally real of odd degree over~$\Q$,
by Zhang's theorem \cite{ZhaH} \hbox{Thm.\/ A},
$E_D/F$ has Mordell-Weil rank $1\not\equiv 0\mod p$,
a contradiction.
\end{proof}

\begin{conjecture}[Goldfeld \cite{Gol}]
\label{goldfeld}
For every elliptic curve $E/\Q$,
those $\chi\in \Sigma_2$ for which $\ord_{s=1}L(E,\chi,s)>1$ have
density 0 in $\Sigma_2$.
\end{conjecture}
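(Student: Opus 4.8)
The plan is to read Goldfeld's conjecture as a statement about the statistics of the quadratic-twist family $\{E_D\}$ and to split it according to the sign of the functional equation. By modularity the completed $L$-function of each $E_D$ is entire and satisfies a functional equation $s\leftrightarrow 2-s$, so the root number $w(E_D)=\pm1$ fixes the parity of $\ord_{s=1}L(E_D,s)$; for a fixed $E$ the characters $\chi\in\Sigma_2$ split into two subfamilies of positive density according as $w(E_D)=+1$ or $-1$. The condition $\ord_{s=1}L(E,\chi,s)>1$ therefore means either $w(E_D)=+1$ with $L(E_D,1)=0$ (analytic rank $\ge2$) or $w(E_D)=-1$ with $L'(E_D,1)=0$ (analytic rank $\ge3$). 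So it suffices to prove two non-vanishing statements: $L(E_D,1)\neq0$ for a density-one set of even twists, and $L'(E_D,1)\neq0$ for a density-one set of odd twists.

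For the even twists I would use the analytic theory of moments in the quadratic-twist family. An approximate functional equation writes $L(E_D,1)$ as a short sum of Hecke eigenvalues against the quadratic character $\chi_D$; summing over fundamental discriminants $D<x$ and using orthogonality together with Poisson summation in $D$ reassembles the (mollified) first and second moments into double Dirichlet series of the kind studied in \cite{BFH,MM,Wal}. A lower bound for the mollified first moment against an upper bound for the mollified second moment then forces, by Cauchy--Schwarz, a \emph{positive} proportion of nonvanishing; the existence and positive density of rank-$0$ and rank-$1$ twists already follow from those references along these lines.

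The odd twists are harder, and this is where I expect the real obstruction. One would need $L'(E_D,1)\neq0$ for $100\%$ of odd twists, and moment methods for the derivative are much weaker. Via Waldspurger's formula and Gross--Zagier the value $L'(E_D,1)$ is controlled by Fourier coefficients of a weight-$3/2$ form (equivalently by heights of Heegner points), but proving these nonzero for almost all $D$ is out of reach analytically. I would instead route the odd case through arithmetic statistics: study the distribution of $2^\infty$-Selmer groups across the family via the Cassels--Tate pairing, show that for a density-one set of $D$ the group $\sha(E_D)$ is as small as its parity allows, and deduce Mordell--Weil rank $\le1$; the known cases of Birch--Swinnerton-Dyer in ranks $0$ and $1$ then convert this to analytic rank $\le1$, bypassing the derivative entirely.

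The genuine difficulty throughout is upgrading ``positive proportion'' to ``density $0$ of exceptions.'' A second-moment bound alone can never exclude a positive density of higher-rank twists; removing it demands either an unusually long mollifier with sharp uniform error terms---beyond what the double Dirichlet series presently yield---or the delicate equidistribution governing all higher moments of the Selmer distribution. This last step is exactly what is missing, which is why the assertion is stated here as a conjecture and invoked as a hypothesis in the sequel rather than proved.
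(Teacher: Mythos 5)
This statement is a \emph{conjecture} (attributed to Goldfeld \cite{Gol}); the paper offers no proof of it and only invokes it as a hypothesis in Theorem \ref{loc4}, so there is no argument in the paper to compare yours against. You correctly recognize this at the end, and your reduction of the statement --- split $\Sigma_2$ by the root number of the twist, so that the exceptional set is (even twists with $L(E_D,1)=0$) union (odd twists with $L'(E_D,1)=0$) --- is the standard and correct way to frame the problem. Your assessment of the obstruction is also essentially right: mollified first/second moment arguments in the quadratic-twist family give a \emph{positive proportion} of nonvanishing, and no second-moment bound can push the exceptional set down to density zero, which is precisely why the statement remains open and is stated here as a conjecture.

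Two small cautions on attribution. The references \cite{BFH,MM,Wal} are cited in the paper only for the \emph{existence} of a twist of analytic rank $1$ (used in the proof of Theorem \ref{locp}); the positive-proportion nonvanishing results you describe for the even subfamily come from later work and are not what those papers establish. Likewise, your proposed route for the odd subfamily through the distribution of $2^\infty$-Selmer groups and the Cassels--Tate pairing is a genuine research program (and the one that has since come closest to Goldfeld's conjecture for curves with rational $2$-torsion), but it was not available machinery at the time of this paper and in any case does not yet yield the full statement for an arbitrary $E/\Q$. None of this is a flaw in your write-up so much as a reminder that what you have produced is a survey of why the conjecture is believed and why it is hard, not a proof --- which is the honest and correct outcome for this statement.
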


\begin{conjecture}
\label{rank2}
Every elliptic curve $E/\Q$ has a quadratic twist of Mordell-Weil rank 2.
\end{conjecture}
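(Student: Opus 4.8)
The plan is to split the problem into a lower and an upper bound for the rank of a cleverly chosen twist $E_D$. For the lower bound I would use parity. Since the parity conjecture is known for elliptic curves over $\Q$, the root number $w(E_D/\Q)$ determines $\rk E_D/\Q$ modulo $2$, and the standard local formula for $w(E_D/\Q)$ shows that the value $+1$ occurs for a positive proportion of squarefree $D$. I would refine this by a Gouv\^ea--Mazur type argument: parametrising points, the twist by (the squarefree part of) $f(x_0)$, where $E: y^2=f(x)$ and $x_0\in\Q$, automatically carries a rational point and hence has rank $\ge 1$; imposing in addition the congruence conditions that force $w(E_D/\Q)=+1$ makes the rank even, hence $\ge 2$. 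This produces infinitely many twists of rank $\ge 2$, which is essentially the Gouv\^ea--Mazur theorem.

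It then remains to force the rank down to exactly $2$ for at least one twist, i.e. to bound it from above. The cleanest route is analytic: among the family above, look for a $D$ with $L(E_D,1)=0$ and $L''(E_D,1)\ne 0$, so that the analytic rank is exactly $2$. The abundance of such twists is the refined Goldfeld expectation (Conjecture \ref{goldfeld}): almost all even-rank twists should have analytic rank precisely $2$, the analogues in analytic ranks $0$ and $1$ being the nonvanishing theorems of \cite{BFH,MM,Wal}.

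The hard part, and the reason this can only be a conjecture, is the last implication: passing from analytic rank $2$ to Mordell--Weil rank $2$. The Gross--Zagier and Kolyvagin results give the rank part of the Birch--Swinnerton-Dyer conjecture only in analytic rank $\le 1$, and no rank-equality statement is known in analytic rank $2$; so an analytic-rank-$2$ twist does not, with current technology, yield a Mordell--Weil-rank-$2$ twist. On the descent side the same gap reappears: a $2$-descent bounds $\rk E_D/\Q$ above by the $2$-Selmer rank, but separating genuine rational points from the contribution of $\sha(E_D/\Q)[2]$ uniformly over all $E/\Q$ is exactly what is missing. For any single curve a full descent settles the rank, but the uniform statement over every $E/\Q$ is beyond reach, which is why I would expect to have to leave it as a conjecture.
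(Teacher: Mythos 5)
The statement you were given is Conjecture~\ref{rank2} of the paper: it is not proved there, only assumed as a hypothesis in Theorem~\ref{loc4}. So there is no proof of the paper's to compare yours against, and your closing assessment --- that the statement cannot be established with current technology and must be left as a conjecture --- is exactly the paper's position. Your diagnosis of the obstruction is also the standard and correct one: Gouv\^ea--Mazur-type arguments (a rational point coming from $f(x_0)$ together with root number $+1$) give infinitely many twists of rank at least $2$, but only conditionally on the parity conjecture; and the matching upper bound is genuinely open, since Gross--Zagier and Kolyvagin convert analytic rank into Mordell--Weil rank only in analytic rank $\le 1$, while a uniform $2$-descent cannot separate the rank from the contribution of $\sha(E_D/\Q)[2]$ across all twists of all curves. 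The one thing to flag is that neither half of your outline should be presented as unconditional: the lower bound already relies on parity, and the upper bound relies on a rank-equality statement in analytic rank $2$ that nobody can prove. The correct conclusion, which you reach, is that the statement remains exactly what the paper calls it --- a conjecture.
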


\begin{theorem}
\label{loc4}
Let $E/\Q$ be an elliptic curve.
Assuming Conjectures \ref{goldfeld} and~\ref{rank2},
there is no function $k\mapsto\lambda(E/k)\in\Z/4\Z$
defined for local fields $k$ of characteristic 0, such that
for every number field~$K$,
$$
  \rk E/K \equiv \sum_v \lambda(E/K_v) \mod 4.
$$
\end{theorem}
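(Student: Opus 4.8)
The plan is to imitate the proof of Theorem \ref{locp} with $p=2$ and modulo $4$, using the multiquadratic Lemma \ref{quad} (with $k=2$) in place of Lemma \ref{split}, and the rank~$2$ twist provided by Conjecture \ref{rank2} in place of the rank~$1$ twist of \cite{BFH,MM,Wal}. A rank~$2$ (rather than rank~$1$) input is forced on us because over a multiquadratic field the ``generic'' contribution to the Mordell--Weil rank is automatically divisible by~$4$: roughly half of the quadratic twists have odd analytic rank, and the number of such twists is $0$, $2^{d-1}$ or $2^{d}$, always a multiple of~$4$ once $d\ge 3$. A single twist of rank exactly~$2$ is therefore precisely what is needed to reach $2\bmod 4$.

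Concretely, fix by Conjecture \ref{rank2} a squarefree $D_0$ with $\rk E_{D_0}/\Q=2$, and let $S\subset\Sigma_2$ be the set of quadratic characters $\chi$ with $\ord_{s=1}L(E,\chi,s)>1$; by Conjecture \ref{goldfeld} it has density $0$. Running the induction of Lemma \ref{density} I would build a multiquadratic field $F/\Q$ with $\Gal(F/\Q)\iso\F_2^5$, not containing $\sqrt{D_0}$, such that $\chi_{D_0}\psi\notin S$ for every nontrivial character $\psi$ of $\Gal(F/\Q)$. (At each inductive step one avoids the density-$0$ set of characters $\chi$ for which some $\chi_{D_0}\chi\psi'$ lies in $S$, and keeps $\sqrt{D_0}\notin F$.) Set $F'=F(\sqrt{D_0})$, so $\Gal(F'/\Q)\iso\F_2^6$. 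For any prime $l$ the image $H_l$ of the Galois group in $\Q_l^\times/(\Q_l^\times)^2$ has order at most $8$, so the number of primes above $l$ is $2^{5}/|H_l|$ in $F$ and $2^{6}/|H_l'|$ in $F'$, both multiples of~$4$ because $|H_l|,|H_l'|\le 8\le 2^{5-2}$. Hence $F$ and $F'$ both satisfy the hypothesis of Lemma \ref{quad} with $k=2$.

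Now suppose the rank mod $4$ were a sum of local invariants. Lemma \ref{quad} applied to $E$ over $F$ and over $F'$ gives $\rk E/F\equiv\rk E/F'\equiv 0\bmod 4$, so the eigenspace decomposition for the quadratic extension $F'/F$, namely $\rk E/F'=\rk E/F+\rk E_{D_0}/F$, yields $\rk E_{D_0}/F\equiv 0\bmod 4$. On the other hand, decomposing the Mordell--Weil group over the abelian extension $F/\Q$ gives $\rk E_{D_0}/F=\sum_{D'}\rk E_{D_0 D'}/\Q$, the sum over the square classes $D'$ cut out by $F$. The term $D'=1$ contributes $\rk E_{D_0}/\Q=2$. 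For $D'\ne 1$ the character $\chi_{D_0 D'}=\chi_{D_0}\chi_{D'}$ is of the form $\chi_{D_0}\psi$ with $\psi$ nontrivial, hence lies outside $S$, so $\ord_{s=1}L(E_{D_0 D'},s)\le 1$ and, by Gross--Zagier and Kolyvagin, $\rk E_{D_0 D'}/\Q$ equals this order and is $1$ exactly when $w(E_{D_0 D'})=-1$. Since $D'\mapsto w(E_{D_0 D'})$ is a character of the square-class group times the constant $w(E_{D_0})$, the number of $D'$ with $w(E_{D_0 D'})=-1$ is $0$, $2^{4}$ or $2^{5}$; removing the contribution of $D'=1$ leaves a number $\equiv 0$ or $3\bmod 4$. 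Therefore $\rk E_{D_0}/F\equiv 2$ or $1\bmod 4$, in any case $\not\equiv 0$, contradicting the previous computation. Note that $\rk E/\Q$ itself never enters, as it cancels in the difference $\rk E/F'-\rk E/F$.

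The step I expect to be the main obstacle is the root-number bookkeeping: the assertion that $D'\mapsto w(E_{D_0 D'})$ is, up to the constant $w(E_{D_0})$, a character of the square-class group, so that the number of sign $-1$ twists in the coset is divisible by~$4$. This is transparent for $D'$ coprime to the conductor $N_{D_0}$ of $E_{D_0}$, where $w(E_{D_0 D'})=\chi_{D'}(-N_{D_0})\,w(E_{D_0})$ is visibly multiplicative in $D'$, but it needs care at the bad primes. I would secure it by choosing the generators of $\Gal(F/\Q)$ to be ramified only away from $2ND_0$, which is a positive-density constraint and so is compatible with avoiding the density-$0$ set $S$ during the construction of $F$; all the relevant local root-number ratios then lie in the tame regime where multiplicativity is standard.
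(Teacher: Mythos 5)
Your proof is correct and follows the same skeleton as the paper's (rank-$2$ twist from Conjecture \ref{rank2}, Goldfeld plus Lemma \ref{density} to build a multiquadratic $F$ of $2$-power degree in which every place splits into a multiple of $4$, the difference $\rk E/F(\sqrt{D_0})-\rk E/F$ to isolate $\rk E_{D_0}/F$, and Gross--Zagier--Kolyvagin to evaluate it), but you handle the crucial root-number bookkeeping differently. The paper takes $d=5+3|P|$ in Lemma \ref{density} and cuts down to a degree-$2^5$ subfield $F$ in which all bad places and $\infty$ split \emph{completely}; then the root number of $E'$ over every quadratic subfield $\Q(\sqrt m)$ is $+1$ for purely local reasons (split places contribute squares, good places contribute $+1$), so all $31$ nontrivial twists have the same analytic rank $0$ or $1$ and $\rk E'/F$ is $2$ or $33$. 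You instead keep $F$ of degree $2^5$ but unramified at $2ND_0$, invoke the classical formula $w(E_{D_0D'})=\chi_{D'}(-N_{D_0})\,w(E_{D_0})$ to see that $D'\mapsto w(E_{D_0D'})$ is a character times a constant, and count: the number of odd-rank twists among the $31$ is $\equiv 0$ or $3\bmod 4$, so $\rk E_{D_0}/F\equiv 2$ or $1\bmod 4$. Both routes work; the paper's avoids any explicit twisting formula for root numbers at the cost of a larger auxiliary field, while yours needs the (standard, but nontrivial) multiplicativity of $w$ in the twisting discriminant, which your coprimality condition on the ramification of $F$ does legitimately secure, and which is compatible with the density argument since characters of conductor prime to $2ND_0$ have positive density. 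Two cosmetic points: the step ``$\rk E/F\equiv\rk E/F'\equiv 0\bmod 4$'' is really Lemma \ref{split} with $n=4$ (or Lemma \ref{quad} combined with the Mordell--Weil decomposition over a multiquadratic field -- either is fine, but cite the one you mean); and your conclusion $\rk E_{D_0}/F\equiv 2$ or $1\bmod 4$ is consistent with the paper's ``$2$ or $33$''.
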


\begin{proof}
Let $\Q(\sqrt D)$ be a quadratic field such that
the quadratic twist $E'\!=\!E_D$ of $E$ by $D$ has Mordell-Weil rank 2
(Conjecture \ref{rank2}).
The set $S$ of those $\chi\in \Sigma_2$ for which
$\ord_{s=1}L(E',\chi,s)>1$ has density~0 (Conjecture \ref{goldfeld}).
Let~$P$ be the set of primes where $E'$ has bad reduction union $\{\infty\}$,
and apply Lemma \ref{density} to $S$ with $d=5+3|P|$.
The resulting field $F_d$ has a subfield $F$ of degree $2^5$ over $\Q$,
where all places in $P$ split completely:
$\Q_l$ has no $\F_2^4$-extensions, so the condition that a given place in $P$
splits completely drops the dimension by at most 3.
By the same argument, every place of $\Q$ splits in $F$
into a multiple of $4$ places.

Arguing by contradiction, suppose the rank of $E$ mod $4$ is a sum
of local invariants.
Because in $F/\Q$ and therefore also in
$F(\sqrt D)/\Q$ every place
splits into a multiple of $4$ places,
$$
  \rk E/F \equiv 0\mod 4 \quad\text{and}\quad   \rk E/F(\sqrt D) \equiv 0\mod 4
$$
by Lemma \ref{split}.
Therefore $\rk E'/F=\rk E/F(\sqrt D)-\rk E/F$ is a multiple of~$4$.
Now we claim that $E'/F$ has rank 2 or 33, yielding a contradiction.

Let $\Q(\sqrt m)\subset F$ be a quadratic subfield. The root number
of $E'$ over $\Q(\sqrt m)$ is 1, because the root number is a product of
local root numbers and the places in $P$ split in $\Q(\sqrt m)$.
(The local root number is $+1$ at primes of good reduction.) So
$$
  L(E'/\Q(\sqrt m),s) =  L(E'/\Q,s) L(E'_m/\Q,s)
$$
vanishes to even order at $s=1$. Hence the 31 twists
of $E'$ by the non-trivial characters of $\Gal(F/\Q)$
have the same analytic rank 0 or 1, by the choice of~$F$.
By Kolyvagin's theorem \cite{Kol}, their Mordell-Weil ranks
are the same as their analytic ranks,
and so $\rk E'/F$ is either $2+0$ or $2+31$.
\end{proof}

\endcomment

\begin{remark}
In some cases, it may seem reasonable to try and write some
global invariant in $\Z/n\Z$ as a sum of local invariants in $\frac 1m\Z/n\Z$,
i.e. to allow denominators in the local terms. For instance,
one could ask whether the parity of the rank of a cubic twist
(as in Remark \ref{cubic}) can
be written as a sum of local invariants of the form $\frac a3\!\!\mod 2\Z$.

However, introducing a denominator does not appear to help.
First, the prime-to-$n$ part $m'$ of $m$ adds no flexibility, as can be seen
by multiplying the formula by $m'$. (For instance, if there were a
formula for the parity of the rank of a cubic twist as a sum of
local terms in $\frac a3\!\!\mod 2\Z$, then multiplying it by 3 would
yield a formula for the same parity with local terms in $\Z/2\Z$.)
As for the non-prime-to-$n$ part, e.g. the proofs of Theorems~
\ref{locp} and \ref{loc4} immediately adapt to local invariants in
$\frac{1}{p^k}\Z/p\Z$ and $\frac{1}{2^k}\Z/4\Z$, by increasing $d$
by $k$.
\end{remark}

\begin{remark}
\label{determine}
The negative results in this paper rely essentially on the fact that we
allow only additive formulae for global invariants in terms of local
invariants. Although Theorem 1 shows that there is no formula
%for the Mordell-Weil rank
of the form
$$
  \rk E/K = \sum_v \lambda(E/K_v),
$$
the Mordell-Weil rank {\em is\/} determined by the set $\{E/K_v\}_v$
of curves over local fields.
In other words, %there is a formula of the form
$$
  \rk E/K = \text{function}(\{E/K_v\}_v).
$$
In fact, for any abelian variety $A/K$ the set $\{A/K_v\}_v$ determines
the $L$-function $L(A/K,s)$ which is the same as $L(W/\Q,s)$ where
$W$ %=\Res_{K/\Q}A$
is the Weil restriction of $A$ to $\Q$. By Faltings'
theorem \cite{Fa} the $L$-function recovers $W$ up to isogeny, and hence
also recovers the rank $\rk A/K\,(=\!\rk W/\Q)$.
%
%For elliptic curves, the field $K$ and set $\{E/K_v\}_v$ ? ordered? %?
%it is not even up to $\Q$-isogeny
%%but up to $K$-isomorphism:
%$c_4$ and $c_6$ are determined up to numbers which are 4th/6th powers
%in every completion, and these have to be global powers as well
%(elementary argument).
%%[For higher powers, and over number fields --- is this true?
%%If not, is it possible to find abelian varieties which are everywhere
%%locally isomorphic, but not globally?]
\end{remark}

\begin{acknowledgements}
The first author is supported by a Royal Society
University Research Fellowship.
The second author would like to thank Gonville \& Caius College, Cambridge.
\end{acknowledgements}

%Tom Fisher pointed out to us that a possibly more conceptual
%proof of Theorem 1 would be
%`Surely, otherwise somebody would have spotted it by now'.

%\newpage

\end{document}